\theoremstyle{plain}
\newtheorem{theorem}{Theorem}
\newtheorem{corollary}[theorem]{Corollary}
\newtheorem{lemma}[theorem]{Lemma}
\theoremstyle{definition}
\numberwithin{equation}{section}
\newtheorem*{theorem*}{Theorem}
\newcommand{\R}{{\mathbb R}}
\newcommand{\C}{{\mathbb C}}
\def\Xint#1{\mathchoice
{\XXint\displaystyle\textstyle{#1}}
{\XXint\textstyle\scriptstyle{#1}}
{\XXint\scriptstyle\scriptscriptstyle{#1}}
{\XXint\scriptscriptstyle\scriptscriptstyle{#1}}
\!\int}
\def\XXint#1#2#3{{\setbox0=\hbox{$#1{#2#3}{\int}$}
\vcenter{\hbox{$#2#3$}}\kern-.5\wd0}}
\def\dashint{\Xint-}
\newcommand{\M}{\mathcal{M}}
\newcommand{\dy}{\, \mathrm{d}y}
\newcommand{\dx}{\, \mathrm{d}x}
\newcommand{\dz}{\, \mathrm{d}z}
\renewcommand{\S}{\mathcal{S}}
\renewcommand{\d}{\mathrm{d}}
\renewcommand{\dx}{\,\text{\rm d}x}
\renewcommand{\dy}{\,\text{\rm d}y}
\renewcommand{\dz}{\,\text{\rm d}z}
\newcommand{\dw}{\,\text{\rm d}w}
\DeclareMathOperator{\supp}{supp}
\providecommand{\norm}[1]{ \lVert#1  \rVert}
\providecommand{\sint}{ \dashint }
\title{Regularity of maximal functions on Hardy--Sobolev spaces}
\author[Pérez]{Carlos Pérez}
\author[Picon]{Tiago Picon}
\author[Saari]{Olli Saari} 
\author[Sousa]{Mateus Sousa }
\date{\today}
\address{Departamento de Matemáticas, Universidad del País Vasco UPV/EHU, IKERBASQUE,
Basque Foundation for Science, and BCAM, Basque Center for Applied Mathematics, Bilbao, Spain.}
\email{carlos.perezmo@ehu.es}
\address{University of São Paulo
Faculdade de Filosofia, Ciências e Letras de Ribeirão Preto
Departamento de Computação e Matemática
Avenida Bandeirantes 3900, 1404-040, Ribeirão Preto, Brazil}
\email{picon@ffclrp.usp.br}
\address{Institute of Mathematics, 
	University of Bonn,
	Endenicher Allee 60, 53115, Bonn,
	Germany}
\email{saari@math.uni-bonn.de}
\address{
IMPA - Instituto de Matem\'{a}tica Pura e Aplicada\\
Rio de Janeiro - RJ, Brazil, 22460-320.}
\email{mateuscs@impa.br}
\subjclass[2010]{42B25, 42B30, 46E35}
\keywords{Maximal operators, Hardy--Sobolev spaces}
\begin{document}

\begin{abstract}  We prove that maximal operators of convolution type associated to smooth kernels are bounded in the homogeneous Hardy--Sobolev spaces $\dot{H}^{1,p}(\R^d)$ when $1/p < 1+1/d$. This range of exponents is sharp. As a by-product of the proof, we obtain similar results for the local Hardy--Sobolev spaces $\dot{h}^{1,p}(\R^d)$ in the same range of exponents.
\end{abstract}

\maketitle

\section{Introduction}

Let $\varphi:\R^d\rightarrow \R$ be a nonnegative function such that
\begin{equation*}
    \int_{\R^d}\varphi(x)\dx=1.
\end{equation*}
The maximal operator associated to $\varphi$ is defined as
\begin{equation*}
    \mathcal{M}_{\varphi}f(x):=\sup_{t>0}\varphi_t\ast |f|(x),
\end{equation*}
where $\varphi_t(x)=t^{-d}\varphi(\frac{x}{t})$, and $f\in L^1_{loc}(\R^d)$. The simplest example of such an operator is the Hardy--Littlewood maximal operator, which from this point on we denote by $M$. It occurs when $\varphi=\frac{1}{|B_1|}\mathds{1}_{B(0,1)}$, where $B(x,r)$ denotes the $d$-dimensional ball of radius $r$ centered at the $x\in\R^d$ and $|B_r|$ its Lebesgue measure. The operator $M$ evaluates the supremum of all averages of $|f|$ on balls centered at $x$, and for different functions $\varphi$, the operator $\M_\varphi$ can be interpreted as a weighted average variant of $M$.


It was established by Kinnunen \cite{Kinnunen97} that, for $p>1$, $M$ defines a bounded operator in the Sobolev spaces $W^{1,p}(\R^d)$, i.e, there is $C=C_{p}>0$ such that
\begin{equation}\label{Sobolev_bound}
    \|M f\|_{W^{1,p}(\R^d)}\leq C\|f\|_{W^{1,p}(\R^d)}.
\end{equation}
In his paper \cite{Kinnunen97}, Kinnunen obtains the bound \eqref{Sobolev_bound} by proving that a function $f\in W^{1,p}(\R^d)$ satisfies, for almost every $x\in\R^d$, that 
\begin{equation}\label{equation:DM_smaller_than_MD}
    |\partial_jMf(x)|\leq M(\partial_j(f))(x),
\end{equation}
and this last inequality readily implies 
\begin{equation}\label{Sobolev_derivative_bound}
    \sum_j\|\partial_jMf\|_{L^p(\R^d)}\leq C_{p,d}\sum_j\|\partial_jf\|_{L^p(\R^d)},
\end{equation}
which, combined with the well known $L^p$-boundedness, implies the $W^{1,p}$-boundedness. Despite the fact Kinnunen's work in \cite{Kinnunen97} is stated in terms of the classical Hardy--Littlewood case, his proof extends to all $\M_\varphi$ of convolution type that are $L^p$-bounded, i.e,
\begin{equation*}
    |\partial_j\M_\varphi f(x)|\leq \M_\varphi(\partial_j(f))(x),
\end{equation*}
and henceforth one has the analogue of \eqref{Sobolev_derivative_bound} for $\M_{\varphi}$, and as a consequence the $W^{1,p}(\R^d)$-boundedness. 
When $p=1$, Kinnunen's result can not hold because of the fact that $\M_\varphi f\notin L^1(\R^d)$, and this completely rules out the possibility of $\M_\varphi f $ belonging to $W^{1,1}(\R^d)$. This means his result is sharp in the sense of range of exponents. Despite that, one could still ask what happens when examining only the derivative level of the inequality, i.e, could $\M_\varphi$ satisfy an inequality like \eqref{Sobolev_derivative_bound} for $0<p\leq 1$?  A natural way to address what happens in this regime is switching from the Lebesgue $L^p(\R^d)$ spaces to the Hardy spaces $H^p(\R^d)$.

For $0<p\leq \infty$, a distribution $f:\R^d\rightarrow\C$ lies in the Hardy space $H^p(\R^d)$ if its nontangential Poisson maximal function lies in $L^p(\R^d)$. Given a kernel $\psi:\R\rightarrow\C$, the nontangential maximal function associated to $\psi$ of a function $f$ is defined as
\begin{equation*}
    \tilde\M_\psi f(x)=\sup_{|x-y|\leq t}|\psi_t\ast f(y)|,
\end{equation*}
and $f\in H^p(\R^d)$ if $\tilde\M_P f\in L^p(\R^d)$, where
\begin{equation*}
    P(x)=\frac{\Gamma(\frac{d+1}{2})}{\pi^{\frac{d+1}{2}}}\frac{1}{(1+|x|^2)^{\frac{d+1}{2}}}
\end{equation*}
is the Poisson kernel, and we set
\begin{equation*}
    \|f\|_{H^p(\R^d)}:=\|\tilde\M_P f\|_{L^p(\R^d)} .
\end{equation*}
For $p>1$, as a consequence of the $L^p$-boundedness of the nontangential maximal functions and the Lebesgue differentiation theorem, one has $H^p(\R^d)=L^p(\R^d)$. When $0<p\leq 1$ the scenario is different and $H^p(\R^d)$ differs from $L^p(\R^d)$, what makes them natural substitutes for the $L^p$ spaces in this range of exponents.

A distribution $f\in\S'(\R^d)$ belongs to the homogeneous Hardy--Sobolev spaces $\dot{H}^{1,p}(\R^d)$ if for $j=1,\dots,d$, it has a weak derivative $\partial_jf$ in the space $H^p(\R^d)$, and in this case we set
\begin{equation*}
    \|f\|_{\dot{H}^{1,p}(\R^d)}:=\sum_j\|\partial_jf\|_{H^p(\R^d)}.
\end{equation*}
These spaces were first studied by Strichartz \cite{Strichartz90} and, when $ 1/p <1+1/d$, every distribution $f \in \dot{H}^{1,p}(\R^d)$ is known to coincide with a locally integrable function. In particular, one can always make sense of $\M_\varphi f$, as well as its distributional derivatives, whenever $\varphi$ is sufficiently regular, which raises the natural question of boundedness of $\M_\varphi$ in these spaces on this range of exponents. We answer this question for $\varphi\in \mathcal{S}(\R^d)$.
\begin{theorem}\label{theorem:thm1}
Let $\varphi\in \mathcal{S}(\R^d)$ be non-negative, $\int \varphi >0$ and $1/p < 1+1/d$. If $f\in\dot{H}^{1,p}(\R^d )$, then $\M_\varphi f\in \dot{H}^{1,p}(\R^d )$ and there is $C=C(\varphi,p,d)>0$ such that
\begin{equation}\label{Hardy_derivative_bound}
    \|\M_\varphi f\|_{\dot{H}^{1,p}(\R^d)}\leq C\|f\|_{\dot{H}^{1,p}(\R^d)}.
\end{equation}
In particular, $\M_\varphi$ is a bounded operator from $\dot{H}^{1,p}(\R^d )$ to $\dot{H}^{1,p}(\R^d )$.
\end{theorem}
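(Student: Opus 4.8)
The plan is to reduce the theorem to a single gradient estimate and then prove that estimate through the maximal function characterization of $H^p(\R^d)$. Since $\|\M_\varphi f\|_{\dot{H}^{1,p}(\R^d)}=\sum_j\|\partial_j\M_\varphi f\|_{H^p(\R^d)}$, it suffices to show for each $j$ that $\partial_j\M_\varphi f\in H^p(\R^d)$ together with $\|\partial_j\M_\varphi f\|_{H^p}\lesssim\sum_k\|\partial_k f\|_{H^p}$. The starting point is Kinnunen's observation, available here because $\varphi\in\mathcal S(\R^d)$ and $f$ is locally integrable in the given range: for a.e.\ $x$ the supremum defining $\M_\varphi f(x)=\sup_{t>0}\varphi_t\ast|f|(x)$ is attained at some scale $t(x)\in[0,\infty)$, and differentiating under the envelope (the term coming from $\partial_t$ vanishes at the optimal scale) gives the pointwise identity $\partial_j\M_\varphi f(x)=\varphi_{t(x)}\ast\partial_j|f|(x)=\varphi_{t(x)}\ast(\sgn(f)\,\partial_j f)(x)$ almost everywhere. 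In particular $\M_\varphi f$ is weakly differentiable with $|\partial_j\M_\varphi f|\le\M_\varphi(\partial_j f)$, which already settles the case $p>1$ by the $L^p$-boundedness of $\M_\varphi$ (here $H^p=L^p$ and $\dot{H}^{1,p}=\dot{W}^{1,p}$), recovering the homogeneous analogue of \eqref{Sobolev_derivative_bound}.

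For the genuinely new range $d/(d+1)<p\le1$ I would fix $\psi\in\mathcal S(\R^d)$ with $\int\psi=1$ and use $\|\partial_j\M_\varphi f\|_{H^p}\approx\|\sup_{s>0}|\psi_s\ast\partial_j\M_\varphi f|\|_{L^p}$. Writing $\eta=\partial_j\psi$, so that $\int\eta=0$, and moving the derivative onto the kernel gives $\psi_s\ast\partial_j\M_\varphi f=s^{-1}\eta_s\ast\M_\varphi f$, so the whole problem becomes the single $L^p$ bound
\[
\Big\|\,\sup_{s>0}s^{-1}|\eta_s\ast\M_\varphi f|\,\Big\|_{L^p}\lesssim\sum_k\|\partial_k f\|_{H^p}.
\]
Exploiting $\int\eta=0$ I would subtract $\M_\varphi f(x)$ and dominate $|\eta_s\ast\M_\varphi f(x)|$ by an integral of $|\M_\varphi f(y)-\M_\varphi f(x)|$ against the rapidly decaying kernel $|\eta_s|$; the factor $|y-x|\lesssim s$ supplied by the oscillation of $\M_\varphi f$ is exactly what cancels the $s^{-1}$.

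The key input that keeps the argument inside $H^p$ is a Hajłasz-type pointwise inequality in which the $L^p$-unbounded Hardy--Littlewood maximal function $M$ is replaced by an $H^p$-adapted maximal function of the gradient. Telescoping $f-\Phi_{t_0}\ast f$ over dyadic scales and writing each mean-zero increment as $t\sum_k(\sigma_k)_t\ast\partial_k f$, with $\sigma_k\in\mathcal S(\R^d)$ coming from a divergence decomposition $\Phi_{1/2}-\Phi=\dive\vec\sigma$, should yield
\[
|f(x)-f(y)|\lesssim|x-y|\big(\mathcal N(\nabla f)(x)+\mathcal N(\nabla f)(y)\big),
\]
where $\mathcal N(\nabla f)=\sum_k\tilde{\M}_{\sigma_k}(\partial_k f)$ is a nontangential maximal function with $\|\mathcal N(\nabla f)\|_{L^p}\lesssim\sum_k\|\partial_k f\|_{H^p}$. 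Feeding this into the previous step controls the small-scale part of $\sup_{s>0}s^{-1}|\eta_s\ast\M_\varphi f|$ by $\mathcal N(\nabla f)$ together with its averages, which lie in $L^p$ precisely because $p>d/(d+1)$.

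The main obstacle, and the reason the honest range is $d/(d+1)<p\le1$ rather than all $p\le1$, is the large-scale contribution $t(x)\gg s$: there the only expression available for the derivative is $\varphi_{t(x)}\ast(\sgn(f)\,\nabla f)$, and the sign factor destroys the cancellation of $\nabla f$, so one cannot simply invoke $\nabla f\in H^p$. I expect the decisive step to be a careful splitting of the scales $t$ relative to $s$, combined with the decay of $f$ furnished by the Sobolev embedding $\dot{H}^{1,p}\hookrightarrow L^{p^\ast}$, which is valid exactly when $1/p<1+1/d$ and forces the supremum in $\M_\varphi f$ to be attained at finite scales, making the relevant scale-sums convergent; this is where the sharpness of the exponent range enters. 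Finally, running the same estimates localized to unit scales should give the stated by-product for the local spaces $\dot{h}^{1,p}(\R^d)$.
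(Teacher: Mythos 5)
Your skeleton is genuinely close to the paper's: both arguments move the derivative onto a test kernel ($\psi_s\ast\partial_j\M_\varphi f=s^{-1}(\partial_j\psi)_s\ast\M_\varphi f$), exploit the vanishing mean of $\partial_j\psi$ to reduce to an oscillation estimate for $\M_\varphi f$, and split according to whether the supremum defining $\M_\varphi f$ is attained at scales below or above $s$. But there are two genuine gaps. The first is in your treatment of the small-scale part. A Haj\l asz-type inequality $|f(x)-f(y)|\lesssim|x-y|(\mathcal N(\nabla f)(x)+\mathcal N(\nabla f)(y))$ does exist (it is essentially the Koskela--Saksman characterization), but feeding it into your oscillation integral produces the term $\sup_s\int|\eta_s(x-y)|\,\mathcal N(\nabla f)(y)\,\dy$, i.e.\ a Hardy--Littlewood-type \emph{first-order} average of the $L^p$ function $\mathcal N(\nabla f)$. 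For $p\le 1$ such a maximal average is never in $L^p$, and the condition $p>d/(d+1)$ does not repair this; your claim that these averages ``lie in $L^p$ precisely because $p>d/(d+1)$'' is where the argument breaks. The paper gets around exactly this obstruction by integrating the local piece over the set $E_1$, applying Kolmogorov's inequality together with the weak-$L^r$ boundedness of $M$ for $r=dq/(d-q)>1$, and invoking the Lerner--P\'erez self-improvement of the $L^1$-Poincar\'e inequality; the net effect is a bound by $M_q(N_pf)(x)$ with $q<p$, and $M_q$ \emph{is} bounded on $L^p$ when $q<p$. Some such gain from exponent $1$ down to an exponent $q<p$ is indispensable, and nothing in your outline supplies it.

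The second gap is the large-scale part, which you explicitly leave open (``I expect the decisive step to be\dots''). The sign obstruction you identify is real, but the paper removes it at the outset rather than at large scales: by \cite{KS2008} one has $\|\partial_j|f|\|_{H^p}\lesssim\|\partial_j f\|_{H^p}$, so one may assume $f\ge 0$ from the start, after which $\partial_j(\varphi_r\ast f)=\varphi_r\ast\partial_j f$ with no $\sgn(f)$ factor. Then for scales $r\ge t$ a direct mean-value estimate gives $\varphi_r\ast f(y)-\varphi_r\ast f(z)\lesssim t\sum_j\tilde\M^4_\varphi(\partial_j f)(z)$, and the nontangential maximal functions $\tilde\M^4_\varphi(\partial_j f)$ lie in $L^p$ by the quasi-norm equivalence \eqref{eq:norm_equivalence} because $\partial_j f\in H^p$. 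Neither scale-splitting relative to $s$ nor the Sobolev embedding plays a role here, so the mechanism you propose for this piece is not the one that closes the argument. Incidentally, your opening identity $\partial_j\M_\varphi f(x)=\varphi_{t(x)}\ast\partial_j|f|(x)$ also needs care for $p<1$, where $\partial_j f$ is only an $H^p$ distribution, but that is a secondary issue next to the two above.
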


Theorem \ref{theorem:thm1} offers a new way to obtain a derivative level boundedness result as \eqref{Sobolev_bound} which avoids \eqref{equation:DM_smaller_than_MD} and introduces Hardy space regularity of maximal functions into the fold for the first time. There are three main steps in the proof of Theorem \ref{theorem:thm1}. Let $\psi \in C_c^{\infty}(B(0,1))$, such that $\| \nabla \psi \|_{L^{\infty}}$, $\| \psi \|_{L^{\infty}} \leq 1$ and $\psi \geq 0$. Given $x,y\in\R^d$ satisfying $|x-y|\leq t$, one has 
\begin{align}\label{eq:psi_convolution_bound}
\begin{split}
   |(\partial_j \M_{\varphi}f)\ast  \psi_t(y)|
 	&=  |\M_{\varphi}f \ast (\partial_j \psi_t)(y)| \\
 	&\leq 2t^{-d-1} \int_{B(x,{2t})} (\M_{\varphi}f(z) - c)^{+} \, \dz
\end{split}
\end{align}
for any $c \leq \inf_{z \in B(x,{2t})} \M_{\varphi}f(z)$. It is a well known result \cite[Theorem 2.1.4]{Grafakos2008} that there is a constant $C=C(\psi)>0$ such that 
\begin{equation}\label{eq:norm_equivalence}
    \frac{1}{C}\|\tilde\M_\psi f\|_{L^p(\R^d)}\leq \|f\|_{H^p(\R^d)}\leq C\|\tilde\M_\psi f\|_{L^p(\R^d)},
\end{equation}
and in order to obtain Theorem \ref{theorem:thm1}, the first step is the choice of an appropriate $c\in\R$ for each $t$ in
\eqref{eq:psi_convolution_bound}. We then split $B_{2t}$ into two sets, a local and a non-local piece. The second step is the analysis of the local piece and has two main ingredients: a characterization of Hardy--Sobolev spaces by Miyachi \cite{Miyachi1990}, which is given in terms of the following maximal operator
\begin{equation}\label{Myiachi_maximal_function}
    N_pf(x)=\sup_{B\ni x}\inf_{c\in \R}|B|^{1/d}\left(\sint_B|f(y)-c|^p\dy\right)^{1/p},
\end{equation}
and a self-improvement lemma from \cite{LP2005}. The third step is the study of the non-local piece, in which we will get a bound in terms of the nontangential maximal function associated to $\varphi$. At this point the aforementioned quasi-norm equivalence \eqref{eq:norm_equivalence} will come into play. {Lastly, Theorem \ref{theorem:thm1} is sharp in term of the range of exponents, and we show it in the last section.}

As pointed out, Hardy spaces are a natural extension of the Lebesgue spaces when $0<p\leq 1$, and although this result is the first of this kind, another very natural question is that of what happens in the $W^{1,1}$ case. Given a maximal operator $\M_\varphi$, it is possible to extend \eqref{Sobolev_derivative_bound} to $p=1$, in the sense that there is a constant $C>0$ such that
\begin{equation}\label{L1_Sobolev_derivative_bound}
     \|\nabla\M_\varphi f\|_{L^1(\R^d)}\leq C\|\nabla f\|_{L^1(\R^d)}
\end{equation}
for every function $f\in W^{1,1}(\R^d)$. There has been a lot of effort in understanding this question in the last few years, as well as the problem of determining the optimal constant in \eqref{L1_Sobolev_derivative_bound}. The first work in this direction is due to Tanaka \cite{Ta}, who studied the case of $\varphi(x)=\mathds{1}_{[0,1]}(x)$, the one-sided Hardy--Littlewood maximal operator, and obtained \eqref{L1_Sobolev_derivative_bound} with $C=1$. Later, Kurka proved the same result for the one-dimensional Hardy--Littlewood maximal operator, with $C=240.004$. Still in the one-dimensional setting, the same results for the Heat and the Poisson kernels were obtained by Carneiro and Svaiter \cite{CS} with $C=1$. Other interesting results related to the regularity of maximal operators are \cite{AP, BCHP, CFS, CH, CMa, CM, HM, HO, KS2003, Lu1, Lu2, Saari2016, Te}.

Recently, Luiro \cite{Luiro2017} proved that inequality \eqref{L1_Sobolev_derivative_bound} is true in any dimension for the uncentered Hardy--Littlewood maximal function, provided one considers only radial functions. Later Luiro and Madrid \cite{LM2017} extended the radial paradigm to the uncentered fractional Hardy--Littlewood maximal function.
As a straightforward consequence of Theorem \ref{theorem:thm1}, we obtain partial progress towards the understanding of the $W^{1,1}$ scenario.
\begin{corollary}\label{corollary:cor1} Let $\varphi \in \mathcal{S}(\R^d)$, and $\int \varphi >0$. If $f\in W^{1,1}(\R^d )$ and $\partial_1f,\dots,\partial_df\in H^1(\R^d )$, then there is a $C=C(\varphi,d)>0$ such that
\begin{equation}\label{H1_bound}
    \|\nabla\M_\varphi f\|_{L^1(\R^d)}\leq C\|f\|_{\dot{H}^{1,1}(\R^d)}.
\end{equation}
\end{corollary}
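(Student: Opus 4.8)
The plan is to derive Corollary \ref{corollary:cor1} directly from Theorem \ref{theorem:thm1} in the special case $p=1$. The hypotheses are exactly that $f \in W^{1,1}(\R^d)$ and $\partial_1 f, \dots, \partial_d f \in H^1(\R^d)$, which together say precisely that $f \in \dot{H}^{1,1}(\R^d)$ with $\|f\|_{\dot{H}^{1,1}(\R^d)} = \sum_j \|\partial_j f\|_{H^1(\R^d)}$. Since $p=1$ satisfies $1/p = 1 < 1 + 1/d$, Theorem \ref{theorem:thm1} applies and yields $\M_\varphi f \in \dot{H}^{1,1}(\R^d)$ together with the bound
\begin{equation*}
    \|\M_\varphi f\|_{\dot{H}^{1,1}(\R^d)} = \sum_j \|\partial_j \M_\varphi f\|_{H^1(\R^d)} \leq C \|f\|_{\dot{H}^{1,1}(\R^d)}.
\end{equation*}

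The only remaining point is to pass from the $H^1$-quasinorm of the derivatives on the left-hand side to the $L^1$-norm of $\nabla \M_\varphi f$ that appears in \eqref{H1_bound}. This is immediate from the elementary containment $H^1(\R^d) \hookrightarrow L^1(\R^d)$: for any $g \in H^1(\R^d)$ one has $\|g\|_{L^1(\R^d)} \leq C' \|g\|_{H^1(\R^d)}$, which follows since the Poisson nontangential maximal function dominates $|g|$ almost everywhere (by the Lebesgue differentiation theorem, $\varphi_t \ast g \to g$ a.e., so $|g| \leq \tilde\M_P g$ a.e.), whence $\|g\|_{L^1} \leq \|\tilde\M_P g\|_{L^1} = \|g\|_{H^1}$. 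Applying this with $g = \partial_j \M_\varphi f$ for each $j$ gives
\begin{equation*}
    \|\nabla \M_\varphi f\|_{L^1(\R^d)} \leq \sum_j \|\partial_j \M_\varphi f\|_{L^1(\R^d)} \leq \sum_j \|\partial_j \M_\varphi f\|_{H^1(\R^d)},
\end{equation*}
and chaining this with the estimate from Theorem \ref{theorem:thm1} produces \eqref{H1_bound}.

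I do not anticipate any serious obstacle, since this is a formal deduction: the substance of the work is entirely contained in Theorem \ref{theorem:thm1}, and the corollary merely reinterprets its $p=1$ instance. The only item warranting a line of justification is the embedding $H^1 \hookrightarrow L^1$, which I would state explicitly (or cite) rather than prove in detail. One should also note that the weak derivative $\partial_j \M_\varphi f$ exists and agrees with the distributional derivative as an $L^1$ function, but this too is guaranteed by Theorem \ref{theorem:thm1} asserting membership in $\dot{H}^{1,1}(\R^d)$, so no additional argument about the differentiability of the maximal function is needed here.
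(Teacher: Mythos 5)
Your proposal is correct and matches the paper's (implicit) argument: the paper presents the corollary as a straightforward consequence of Theorem \ref{theorem:thm1} with $p=1$, combined with the standard embedding $H^1(\R^d)\hookrightarrow L^1(\R^d)$ applied to each $\partial_j\M_\varphi f$, exactly as you do. No issues.
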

In the same spirit of \cite{Luiro2017,LM2017}, Corollary \ref{corollary:cor1} implies that $|\nabla \M_\varphi f|\in L^1(\R^d)$ under stronger conditions than just $f\in W^{1,1}(\R^d)$, which sheds new light on the question if one might have \eqref{L1_Sobolev_derivative_bound} for general $f\in W^{1,1}(\R^d)$.


\subsection{A word on forthcoming notation}
We denote by $d \geq 1$ the dimension of the underlying space. We represent the characteristic function of $E$ by $\mathds{1}_{E}$, and averages of $f \in L^{1}(E)$ are denoted as 
\[\frac{1}{|E|} \int_{E} f(y) \dy = \sint_{E} f(y) \dy = f_E  \]
whenever $E$ is a measurable set with finite Lebesgue measure, and reserve the letter $B$ for euclidean balls, with $\alpha B$ meaning the ball with same center and $\alpha$ times the radius . If not otherwise stated, all spaces of functions are defined over the whole $\mathbb{R}^{d}$, e.g. $L^{1} = L^{1}(\mathbb{R}^{d})$. We denote by $\|f\|_{r,\infty}$ the $L^{r,\infty}(\R^d)$ norm of $f$, i.e, the weak $L^r(\R^d)$-norm of $f$. The positive part of a function $f$ is denoted by $(f)^{+} := 1_{\{f > 0\}} f $. We denote $a \lesssim b$ if $a \leq Cb$ for some constant $C>0$, and $a\eqsim b$ if $a\lesssim b$ and $b\lesssim a$. A possible subscript, such as $a \lesssim_p b$, indicates particular dependency on some other value (here $p$). In the proofs, $C$ represents a generic constant, and may change even within a line.

Given a locally integrable function $\varphi \geq 0$, we define the following auxiliary maximal functions (for $f \in L^{1}_{loc}$)
\begin{align*}
\begin{split}
    &\tilde\M^{a}_{\varphi} f (x)= \tilde\M_{\varphi_{1/a}} f (x)	:= \sup_{(y,t) \in \{(y,t): |x-y| < a t \}} | \varphi_t * f(y) |, \quad a > 0 \\
&M_p f(x) 
	= \sup_{r > 0} \left(  \sint_{B(x,r)} |f(y)|^{p} \, \dy \right)^{1/p}, \quad p > 0     
\end{split}
\end{align*}
If $a = 1$ or $p = 1$, they will be suppressed from the notation. Note that the definition of $\tilde\M^{a}_{\varphi}f$ makes sense for $f$ a tempered distribution provided that $\varphi$ is a Schwartz function.

\section{Preliminaries}
Given a function $f\in L^1_{loc}$, let $N_p(f)$ be the maximal function defined in \eqref{Myiachi_maximal_function}. This operator was first considered by Calder\'on \cite{Calderon72}, when $p>1$, to characterize functions with weak derivatives in $L^p$ spaces, and later studied by Miyachi \cite{Miyachi1990}, for $0<p\leq 1$, in order to obtain similar characterizations for Hardy spaces. As our first ingredient, we use his characterization in the form of the next result. 
\begin{lemma}[Calder\'on \cite{Calderon72}, Miyachi \cite{Miyachi1990}]
\label{lemma:Miyachi}
Let $1/p<1 + 1/d $ and $f \in L^{1}_{loc}$. Then
\begin{equation}\label{Miyachi_inequality}
\|N_{p}f \|_{L^{p}} \eqsim \| f \|_{\dot{H}^{1,p}}     
\end{equation}
\end{lemma}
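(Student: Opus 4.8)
\emph{Plan.} Since the range $p>1$ is due to Calder\'on and the range $0<p\le 1$ to Miyachi, I would present the argument as the conjunction of two inequalities and emphasize where the Hardy--space range forces a change of tools. The unifying principle is that the scale--weighted oscillation defining $N_p f$ in \eqref{Myiachi_maximal_function} is, via the (Sobolev--)Poincar\'e inequality, comparable to a local average of $|\nabla f|$, so that $N_p f$ behaves like a maximal function of the gradient. The hypothesis $1/p<1+1/d$ enters precisely to guarantee that every $f\in\dot H^{1,p}$ is locally integrable and that the Sobolev--Poincar\'e inequality is available with an exponent $q\le p$ below the critical threshold.

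For the bound $\|N_p f\|_{L^p}\lesssim \|f\|_{\dot H^{1,p}}$, I would first prove the pointwise estimate $N_p f(x)\lesssim M_q(|\nabla f|)(x)$ for a suitable $q<p$: on each ball $B\ni x$ the Sobolev--Poincar\'e inequality identifies the scale--normalised quantity in \eqref{Myiachi_maximal_function} with an $L^q$--average of $|\nabla f|$ on $B$, and taking the supremum over $B\ni x$ gives the claim. When $p>1$ one closes the estimate at once: choosing $q<p$ and applying the Hardy--Littlewood maximal theorem on $L^{p/q}$ yields $\|M_q(|\nabla f|)\|_{L^p}\lesssim\|\nabla f\|_{L^p}\eqsim\|f\|_{\dot H^{1,p}}$, the last equivalence being just $H^p=L^p$ for $p>1$. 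When $p\le 1$ this final step is unavailable, since $H^p\neq L^p$ and $M$ is unbounded on $L^p$; here I would instead run a genuine Hardy--space argument, decomposing $\nabla f$ atomically (or through its grand maximal function) and summing the oscillation contributions over a Calder\'on--Zygmund/Whitney decomposition to recover $\|\nabla f\|_{H^p}$.

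For the reverse bound $\|f\|_{\dot H^{1,p}}\lesssim \|N_p f\|_{L^p}$, I would use the maximal--function characterization \eqref{eq:norm_equivalence} of $H^p$ and estimate $\tilde\M_\psi(\partial_j f)$ for a fixed $\psi\in C_c^\infty(B(0,1))$. Integrating by parts and using $\int\partial_j\psi=0$ gives, for $|x-y|<t$ and every constant $c$,
\begin{equation*}
|\psi_t*\partial_j f(y)|=t^{-1}\,|(\partial_j\psi)_t*(f-c)(y)|\lesssim t^{-1}\dashint_{B(y,t)}|f-c|\,;
\end{equation*}
since $B(y,t)\subset B(x,2t)$ with comparable measures, the supremum over the nontangential cone is controlled by the oscillation of $f$ on balls centred at $x$, that is, by $N_1 f(x)$. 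Taking $L^p$ norms and invoking \eqref{eq:norm_equivalence} then bounds $\sum_j\|\partial_j f\|_{H^p}$ by $\|N_1 f\|_{L^p}$.

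The main obstacle is the passage from $N_1 f$ back to $N_p f$ when $p\le 1$: Jensen's inequality runs the wrong way, so the $L^1$--oscillation that naturally controls $\tilde\M_\psi(\partial_j f)$ cannot be dominated by the $L^p$--oscillation defining $N_p f$. I would resolve this with a self--improvement (reverse H\"older / Gehring--type) lemma, namely the one cited as \cite{LP2005}, which upgrades the $L^p$ oscillation bound to an $L^1$ one modulo a harmless maximal operator; the Hardy--Littlewood theorem, applied with exponents in the admissible range dictated by $1/p<1+1/d$, then closes the argument. In short, all genuine difficulty concentrates in the range $p\le 1$, where $L^p$ is not normable and each step that was elementary for $p>1$ must be replaced by its Hardy--space counterpart.
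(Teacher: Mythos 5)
The paper does not actually prove this lemma: its entire ``proof'' is a citation to Theorem 3\,(ii) and Theorem 4\,(ii) of Miyachi \cite{Miyachi1990} (with the case $p>1$ going back to Calder\'on \cite{Calderon72}). So you are reconstructing an argument the authors deliberately outsource, and your reconstruction should be judged on its own merits.

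Your treatment of the direction $\|f\|_{\dot H^{1,p}}\lesssim\|N_pf\|_{L^p}$ is essentially sound: testing $\partial_jf$ against $\psi_t$, exploiting $\int\partial_j\psi=0$ to subtract a constant, and dominating the resulting normalised oscillation by $N_1f$ is the standard route, and your proposed fix for the $p\le1$ obstruction (upgrading control by $N_pf$ to control of the $L^1$-oscillation via the self-improvement result of \cite{LP2005}, then absorbing the extra maximal operator using $p/q>1$) is precisely the mechanism the authors themselves deploy, via Lemma \ref{lemma:lerner_perez}, in the proof of Theorem \ref{theorem:thm1}. The other direction, however, contains a genuine gap for $p\le1$, and that is the substantive half of Miyachi's theorem. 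Your pointwise estimate $N_pf(x)\lesssim M_q(|\nabla f|)(x)$ rests on the Sobolev--Poincar\'e inequality, which presupposes that $\nabla f$ is a locally integrable function; when $\partial_jf$ is merely an element of $H^p$ with $p\le1$, it is in general only a distribution, and $|\nabla f|$ has no pointwise meaning. The correct substitute is a Poincar\'e-type inequality with the grand (or nontangential) maximal function of $\partial_jf$ on the right-hand side, or an atomic decomposition of $\partial_jf$ together with a summation argument showing that the $\ell^p$-quasinorm of the coefficients controls $\|N_pf\|_{L^p}$ --- and one must then also justify that the atomic series for $\partial_jf$ can be integrated back to $f$ compatibly with the infimum over constants in \eqref{Myiachi_maximal_function}. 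You name this replacement but do not carry it out, and none of the tools quoted in the paper (Lemmas \ref{lemma:Miyachi} and \ref{lemma:lerner_perez} themselves excepted) supply it; as written, that direction of the equivalence is asserted rather than proved.
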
 
\begin{proof}
This follows from Theorem 3 (ii) and Theorem 4 (ii) in \cite{Miyachi1990}.
\end{proof}

Our second ingredient is a self-improvement from \cite{LP2005}. Let $r \in (0,\infty)$. Let $\mathcal{B}$ be the family of all Euclidean balls in $\mathbb{R}^{d}$. A functional $a : \mathcal{B} \to [0, \infty]$ satisfies the condition $D_r$ if there is a finite constant $c$ such that
\[\sum_{ i } a(B_i)^{r} |B_i| \leq c^{r} a(B) |B| \]
whenever the sum is over a family of pairwise disjoint sub-balls of $B$. This condition was introduced in \cite{FPW} and further exploited in \cite{MP}.

For any locally finite Borel measure $\mu$, the functional 
\[a(B) = |B|^{1/d} \left( \frac{\mu(B)}{|B|} \right)^{1/q} \]
satisfies $D_{dq/(d- q)}$. In particular,  $r=dq/(d-q)>1$ if and only if $1/q < 1 + 1/d$.

Let $0<q<p$. For every $f \in L^{q}_{loc}$ it always holds
\[\inf_{c \in \mathbb{R}}  \left( \sint_{B} |f - c|^{q} \right)^{1/q} \leq C |B|^{1/n} \left( \sint_{B} |N_{p} f|^{q} \,\dy \right)^{1/q}. \]
Applying Corollary 1.4 in \cite{LP2005} together with Lemma \ref{lemma:Miyachi}, we obtain the following lemma:
\begin{lemma}[Lerner--P\'erez \cite{LP2005}]
\label{lemma:lerner_perez}
Let $p,q > 0$ be such that $1/p < 1/q < 1 + 1/d$ and let $f \in L^{1}_{loc}(\R^d)$ have first distributional derivatives in $H^{p}(\R^d)$. Then 
\[ |B|^{-1/r} \| 1_{B} (f - f_{B}) \|_{{r, \infty}} \leq C |B|^{1/d}  \left( \sint_{2B} |N_{p}f|^{q} \,\dy \right)^{1/q}  \]
for all balls $B$ and $r=\frac{dq}{d-q}$.  
  \end{lemma}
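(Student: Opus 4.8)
The plan is to read this as a direct instance of the self-improving scheme of Lerner--P\'erez, with $N_p f$ supplying the controlling measure, and then to invoke Corollary~1.4 of \cite{LP2005} as a black box. First I would set $\mu(E) = \int_E |N_p f|^q \, \dy$ and form the functional
\[
a(B) = |B|^{1/d}\left(\frac{\mu(B)}{|B|}\right)^{1/q} = |B|^{1/d}\left(\sint_B |N_p f|^q \, \dy\right)^{1/q}.
\]
By Lemma~\ref{lemma:Miyachi}, the hypothesis that $f$ has distributional derivatives in $H^p$ gives $N_p f \in L^p$; since $1/p < 1/q$ forces $q < p$, we get $N_p f \in L^{q}_{loc}$, so $\mu$ is locally finite and $a$ is finite on every ball. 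As recorded before the statement (following \cite{FPW, MP}), $a$ satisfies the packing condition $D_r$ with $r = dq/(d-q)$: indeed $a(B_i)^r |B_i| = \mu(B_i)^{r/q}$ because the exponent $r/d - r/q + 1$ of $|B_i|$ vanishes, and then disjointness of the $B_i \subset B$ together with the superadditivity of $t \mapsto t^{r/q}$ (note $r/q = d/(d-q) \geq 1$) yields $\sum_i a(B_i)^r |B_i| \leq \mu(B)^{r/q} = a(B)^r |B|$. The key role of the hypotheses is the equivalence $r > 1 \Leftrightarrow 1/q < 1 + 1/d$, which places us in the range where the self-improvement is available.

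The second step is to produce the generalized Poincar\'e inequality that feeds the self-improvement, namely $\inf_{c \in \R}\left(\sint_B |f - c|^q \, \dy\right)^{1/q} \leq C\, a(B)$ for every ball $B$. This is exactly the inequality stated before the lemma, and it follows in one line from the definition of $N_p f$: directly from that definition one has, for every $x \in B$, the estimate $\inf_c \left(\sint_B |f - c|^p\right)^{1/p} \leq |B|^{1/d} N_p f(x)$, so the left side is at most $|B|^{1/d}\essinf_B N_p f \leq |B|^{1/d}\left(\sint_B |N_p f|^q\right)^{1/q} = a(B)$; Jensen's inequality and $q < p$ then give $\inf_c\left(\sint_B |f-c|^q\right)^{1/q} \leq \inf_c\left(\sint_B |f-c|^p\right)^{1/p}$, and combining the two displays proves the claim.

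With $a \in D_r$ for $r > 1$ and the Poincar\'e inequality in hand, the third step is simply to apply Corollary~1.4 of \cite{LP2005}. Its self-improving mechanism upgrades the $L^q$-oscillation control to a weak-type estimate at the improved Sobolev exponent $r$, producing a bound of the form $\|\mathds{1}_B(f - f_B)\|_{r,\infty} \leq C |B|^{1/r} a(2B)$, where the dilation to $2B$ is an artefact of the covering/decomposition argument internal to the corollary and the passage from $\inf_c$ to the mean $f_B$ is likewise absorbed there. Since $|2B|^{1/d} = 2|B|^{1/d}$ gives $a(2B) \eqsim |B|^{1/d}\left(\sint_{2B} |N_p f|^q\right)^{1/q}$, dividing through by $|B|^{1/r}$ yields exactly the asserted inequality. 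The genuine content, and the main obstacle, is entirely contained in Corollary~1.4: the self-improving passage from the $L^q$-oscillation inequality to the weak-$L^r$ estimate with the dimensional gain $r = dq/(d-q)$, which rests on the $D_r$ packing condition and a Calder\'on--Zygmund/rearrangement argument. Everything outside that black box --- identifying the functional, checking $D_r$, verifying the Poincar\'e input, and tracking the scaling and dilation factors --- is routine, but some care is needed so that the normalizations (the factor $|B|^{1/d}$, the strict inequality $q < p$, and the equivalence $r > 1 \Leftrightarrow 1/q < 1 + 1/d$) line up with the precise hypotheses of the corollary.
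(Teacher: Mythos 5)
Your proposal is correct and follows essentially the same route as the paper, whose proof consists precisely of the discussion preceding the lemma (the functional $a(B)=|B|^{1/d}(\mu(B)/|B|)^{1/q}$ with $\mu = |N_pf|^q\,\dy$, its $D_r$ condition with $r=dq/(d-q)>1$, the starting Poincar\'e-type inequality from the definition of $N_p$, and Lemma \ref{lemma:Miyachi} to ensure local $q$-integrability) followed by an appeal to Corollary 1.4 of \cite{LP2005}. You supply somewhat more detail on the $D_r$ verification and the Jensen step, but the decomposition of the argument and the black-box use of the self-improvement corollary are identical.
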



\section{Proof of Theorem \ref{theorem:thm1}}

\subsection{The case of compact support}

Let $f\in \dot{H}^{1,p}(\R^d)$ and $\varphi$ be smooth and compactly supported. By a simple dilation argument, we can assume that $\supp(\varphi)\subset B(0,1)$. Fix $j \in \{1, \ldots, d\}$ and $\psi \in C_0^{\infty}(B(0,1))$ with $\| \nabla \psi \|_{L^{\infty}}, \| \psi \|_{L^{\infty}} \leq 1$ and $\psi \geq 0$. As pointed out in \eqref{eq:psi_convolution_bound}, if $|x-y|\leq t$ then
\begin{align*}
\begin{split}
   |(\partial_j \M_{\varphi}f)\ast  \psi_t(y)| & =|(\partial_j( \M_{\varphi}f-c))\ast  \psi_t(y)| \\
 	&=  |(\M_{\varphi}f-c) \ast (\partial_j \psi_t)(y)| \\
 	&\leq t^{-d-1} \int_{B(x,{2t})} (\M_{\varphi}f(z) - c)^{+} \dz
\end{split}
\end{align*}
for any $c  \leq \inf_{z \in B(x,{2t})} \M_{\varphi}f(z)$. Since $ \| \partial_j |f| \|_{H^{p}} \lesssim \| \partial_j f \|_{H^{p}}$ as a consequence of \cite[Theorem 1]{KS2008}, it suffices to prove the claim for $|f|$. Hence, without loss of generality, we may assume that $f \geq 0$. We choose $c = \inf_{z \in B(x,2t)} \M_{\varphi}f$. We set
\begin{align*}
E_1 = \{y \in B(x,2t) : \M_{\varphi}f(y) = \sup_{r < t} \varphi_r * f (y)   \}, \\
E_2 = \{y \in B(x,2t) : \M_{\varphi}f(y) = \sup_{r \geq t} \varphi_r * f (y)\},
\end{align*}
and we proceed to analyze each set separately.

First, we note that 
\begin{equation}
\label{eq:errorterm}
f_{B(x,4t)} - c = f_{B(x,4t)} - \inf_{z \in B(x,2t)} \M_{\varphi}f(z) \lesssim \sint_{B(x,4t)} |f(y)-f_{B(x,4t)}| \,\dy  .
\end{equation}  
Second, since $\M_{\varphi} f \lesssim Mf$, we have
\begin{align*}
 \int_{E_1}  (\M_{\varphi}f(y) - f_{B(x,4t)}  )^{+} \, \dy  
	&\leq \int_{E_1}  \M_{\varphi}( |f - f_{B(x,4t)}|\mathds1_{B(x,4t)})(y) \, \dy    \\
	&\lesssim \int_{E_1}  M( |f - f_{B(x,4t)}|\mathds1_{B(x,4t)})(y) \, \dy.
\end{align*}
In the first inequality above we have used that $\supp(\varphi)\subset B(0,1)$ and the definition of $E_1$. Since for any $h\in L^1_{loc}$ and $r>1$ one has
\begin{equation*}
    \|h\|_{L^1(E)}\leq r'|E|^{1-1/r}\|h\|_{r,\infty},
\end{equation*}
for any choice of $q  \in (d/(d+1),p)$, we have for $r = dq/(d-q)$ that
\begin{align*}
    \begin{split}
     \int_{E_1}  (\M_{\varphi}f(y) - (\M_{\varphi}f)_{B(x,4t)}  )^{+} \,\dy    & \lesssim  |E_1|^{1-1/r} \|  M( |f - f_{B(x,4t)} |\mathds1_{B(x,4t)}) \|_{L^{r, \infty}} 
	  \\ 
	&\lesssim |B_{4t}|^{1-1/r} \| (f - f_{B(x,4t)})\mathds1_{B(x,4t)}   \|_{L^{r, \infty}}  ,
    \end{split}
\end{align*}
the last inequality being due to boundedness of $M$ on $L^{r, \infty}(\R^d)$ when $r > 1$. Now we appeal to Lemma \ref{lemma:lerner_perez} to both the display above as well as to the quantity \eqref{eq:errorterm} to obtain
\begin{equation}\label{eq:E1_estimate}
    \int_{E_1}  (\M_{\varphi}f(y) - c  )^{+} \,\dy\lesssim t^{d+1}\left(\sint_{B(x,8t)}|N_pf(y)|^q\dy\right)^{1/q} \leq t^{d+1}M_q(N_pf)(x)
\end{equation}

We move on to estimate the integral over $E_2$. Let $y \in E_2$, $z \in B(x,2t)$ and $r > t$. Let $\vec{e} = \frac{y-z}{|y-z|}$. Then
\begin{align*}
\varphi_r * f(y) - \varphi_r * f(z) &= \int_{ \mathbb{R}^{d}} f(w)[ \varphi_r (y-w) - \varphi_r (z-w)] \, \dw \\
 	& = \int_{\mathbb{R}^{d}}  \int_{0}^{|y-z|} (f(w) \vec{e}~) \cdot \nabla \varphi_r \left(z - w + \tau \vec{e}  \right) \, \d\tau \dw  \\
 	& \leq|y-z|\sum_{j=1}^d   \sup_{|\eta|\leq |y-z|} |f * \partial_j\varphi_r \left(z + \eta \right)| \\
\end{align*}
since $|(z+\eta) - z| = |\eta| \leq |y-z| \leq 4t<4r$, and $|f\ast\partial_j\varphi|=|\partial_jf\ast\varphi|,$ we have 
\begin{equation*}
    \varphi_r * f(y) - \varphi_r * f(z)\leq 4t\sum_{j=1}^d   \sup_{|w-z|\leq 4r} | \partial_jf\ast\varphi_r \left(w \right)|\leq 4t\sum_{j=1}^d   \tilde{\M}^{4}_\varphi(\partial_jf)(z).
\end{equation*}
Consequently, for $y \in E_2$
\begin{align*}
\M_{\varphi}f(y) - \inf_{z \in B(x,2t)} \M_{\varphi}f(z)
	& = \sup_{r > t} \inf_{z \in B(x,2t)} \sup_{\rho > 0} ( \varphi_r * f(y) -  \varphi_\rho * f(z) ) \\
	& \leq \sup_{r > t} \inf_{z \in B(x,2t)}  ( \varphi_r * f(y) -  \varphi_r * f(z) ) \nonumber \\
 &\leq 4 t \inf_{z \in B(x,2t)}  \tilde{\M}^{4}_\varphi(\partial_j f)(z)\\
 &\leq 4t \tilde{\M}^{4}_\varphi(\partial_j f)(x) ,
\end{align*}
and we conclude 
\begin{equation}
\label{eq:E2_estimate}
\int_{E_2} ( \M_{\varphi}f - c )^{+} \,\dy \leq  4t|E_2|\tilde{\M}^{4}_\varphi(\partial_j f)(x)  \lesssim t^{d+1 } \tilde{\M}^{4}_\varphi(\partial_j f)(x).
\end{equation}
Combining \eqref{eq:psi_convolution_bound}, \eqref{eq:E1_estimate} and \eqref{eq:E2_estimate}  we have
\begin{align*}
 \sup_{|x-y|\leq t}|(\partial_k \M_{\varphi}f)\ast  \psi_t(y)| 
 	&\lesssim M_{q} N_{p}f (x) + \sum_{j=1}^{d} \tilde{\M}^{4}_\varphi(\partial_j f)(x),
\end{align*}
and since $p/q>1$, it from follows boundedness of $M_q$ on $L^{p}$, the already mentioned quasi-norm equivalence \eqref{eq:norm_equivalence} in $H^p$ applied to both $\psi$ and $\varphi_{1/8}$, and Lemma \ref{lemma:Miyachi} that
\[ \norm{ (\M_{\varphi}f ) }_{\dot{H}^{1,p}} \lesssim \norm{N_{p}f }_{L^{p}} + \sum_{j=1}^{d} \norm{\tilde{\M}^{4}_\varphi(\partial_j f)}_{L^p} \lesssim\norm{ f}_{\dot{H}^{1,p}}, \]
which is the desired result.
\subsection{The case of a general support} 
Given $\varphi \in \mathcal{S}(\R^d)$, for some constant $C=C(\varphi)$ one has
\begin{equation*}
    \varphi(x)\leq C \sum_{k=0}^{\infty}2^{-k}\tfrac{1}{|B(0,2^k)|}\mathds1_{B(0,2^k)}.
\end{equation*}
We can proceed now as in the case of compact support and divide $B(x,2t)$ into $E_1$ and $E_2$. In $E_2$ the support does not play a role in the proof. In $E_1$ one just has to observe that
\begin{align*}
    \begin{split}
    (\M_{\varphi}f - f_{B(x,4t)}  )^{+}
    &\lesssim \sum_k 2^{-k}M( |f - f_{B(x,4t)}|\mathds1_{B(x,2^k(4t))}) \\
    &\leq  \sum_k 2^{-k}[M( |f - f_{B(x,2^k(4t))}|\mathds1_{B(x,2^k(4t))})+|f_{B(x,4t)}-f_{B(x,2^k(4t))}|] \\
    &=\sum_k 2^{-k}[M( |f - f_{B(x,2^k(4t))}|\mathds1_{B(x,2^k(4t))})+|(f-f_{B(x,2^k(4t))})_{B(x,4t)}|] \\
    &\lesssim \sum_k 2^{-k}M( |f - f_{B(x,2^k(4t))}|\mathds1_{B(x,2^k(4t))}).
    \end{split} 
\end{align*}
Now integrating over $E_1$ and applying  Lemma \ref{lemma:lerner_perez} in each $B(x,2^k(4t))$ as done before will imply the desired result for Schwartz kernels. 

\section{Remarks}
{
\subsection{The hypothesis on the kernels} One might wonder if the hypothesis $\varphi \in \mathcal{S}(\R^d)$ can be weakened, to obtain, for instance, the same result for the Hardy-Littlewood maximal function. In the analysis of the local piece, smoothness is not used at all, so this part of the analysis holds as long as the kernel has enough decay. On the other hand, the analysis of the non-local piece depends of norm equivalence considerations in $H^p(\R^d)$. To keep technicalities to a minimum we state it only in terms of Schwartz functions, but as long as $\M_\varphi$ satisfies \eqref{eq:norm_equivalence} and decays faster than $(1+|x|)^{-d-1}$, the same techniques apply. Unfortunately, this means the Hardy-Littlewood case falls out of the scope of the techniques employed.}

\subsection{Sharpness of the results in term of the range of exponents} One might wonder if its possible to extend the results in Theorem \ref{theorem:thm1} to the case $1/p\leq 1+1/d$. The answer is negative. If one considers any smooth compactly supported function $f$ with vanishing moments up to order 1, then $f\in \dot{H}^{1,\frac{d}{d+1}}(\R^d)$. On the other, for any kernel $\varphi\in C_c^\infty(\R)$, one has for $j=1,\dots,d$ that $(\partial_j\M_\varphi(f))'(x)\eqsim |x|^{-(d+1)}$ when $|x|\rightarrow\infty$, which implies it does not belong to $L^{\frac{d}{d+1}}(\R^d)$, and therefore $H^{\frac{d}{d+1}}(\R^d)$. To see this is 
true, we use the following observation due to Luiro \cite{Lu1}: if for some $t>0$ one has $\M_\varphi(f)(x)=|f|\ast\varphi_t(x)$ then
$$\partial_j\M_\varphi(f)(x)=\partial_j|f|\ast\varphi_t(x)=\tfrac{1}{t}|f|\ast(\partial_j\varphi)_t(x)$$
Now, when $|x|\rightarrow\infty$, any admissible $t$ will be roughly the size of $|x|$, and now by standart considerations this will imply $\partial_j\M_\varphi(f)(x)\eqsim |x|^{-(d+1)}$.

\subsection{Local Hardy spaces} One can consider similar questions on the local Hardy spaces $h^p(\R^d)$
introduced by Goldberg \cite{Goldberg}. They are defined similarly as the spaces $H^p(\R^d)$, but with a truncated nontangential maximal operator, i.e, $f\in h^p(\R^d)$ when $\tilde\M_P^1 f\in L^p(\R^d)$, where 
\begin{equation*}
    \tilde{m}_Pf(x)=\sup_{|x-y|\leq t\leq 1}|P_t\ast f(y)|.
\end{equation*}
A function belongs to $\dot{h}^{1,p}(\R^d)$ if $\partial_1f,\dots,\partial_df\in h^p(\R^d)$ and we set
\begin{equation*}
    \|f\|_{\dot{h}^{1,p}(\R^d)}:=\sum_j\|\partial_jf\|_{h^p(\R^d)}.
\end{equation*}
If one considers the operator
\begin{equation}
    m_\varphi f(x):=\sup_{0< t\leq 1}\varphi_t\ast |f|(x),
\end{equation}
we have the following result
\begin{theorem}\label{theorem:Thm2}
Let $\varphi\in \mathcal{S}(\R^d)$ and $1/p < 1+1/d$. Then $m_\varphi$ is a bounded operator from $\dot{h}^{1,p}(\R^d)$ to $\dot{h}^{1,p}(\R^d)$.
\end{theorem}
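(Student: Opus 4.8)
The plan is to mirror the structure of the proof of Theorem \ref{theorem:thm1}, adapting each step to the truncated setting. The fundamental decomposition is identical: fix $j$ and $\psi\in C_0^\infty(B(0,1))$ as before, and observe that for $|x-y|\le t$ we again have
\begin{equation*}
|(\partial_j m_\varphi f)\ast\psi_t(y)|\le t^{-d-1}\int_{B(x,2t)}(m_\varphi f(z)-c)^+\dz
\end{equation*}
for any $c\le\inf_{z\in B(x,2t)}m_\varphi f(z)$. The essential structural difference is that $m_\varphi$ only takes a supremum over $0<t\le 1$, so the decomposition $B(x,2t)=E_1\cup E_2$ should now be made with $E_1$ the set where the maximizing scale is $<t$ and $E_2$ the set where the scale lies in $[t,1]$. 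When $t\ge 1$ the set $E_2$ is empty and only the local estimate \eqref{eq:E1_estimate} survives, which is already controlled by $M_q N_p f$; when $t<1$ both pieces appear but the $E_2$ estimate \eqref{eq:E2_estimate} goes through verbatim because the mean value argument only ever compared $\varphi_r\ast f$ at two nearby points for a single admissible $r$.

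First I would record the characterization of $\dot h^{1,p}$ analogous to Lemma \ref{lemma:Miyachi}. Miyachi's work in \cite{Miyachi1990} also treats the local spaces, so one expects $\|N_p f\|_{L^p}+\|f\|_{L^p}\eqsim\|f\|_{\dot h^{1,p}}$ (or the appropriate inhomogeneous quantity) in the range $1/p<1+1/d$; I would cite the corresponding parts of \cite{Miyachi1990} rather than reprove it. The self-improvement Lemma \ref{lemma:lerner_perez} is purely local and scale-by-scale, so it applies unchanged to each ball $B(x,4t)$. The local-piece estimate therefore reproduces \eqref{eq:E1_estimate} with no modification, giving the bound $t^{d+1}M_q(N_p f)(x)$ for all $t$.

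Next I would handle the nonlocal piece and the final assembly. For $t<1$ the estimate on $E_2$ yields $t^{d+1}\tilde\M^4_\varphi(\partial_j f)(x)$ exactly as in \eqref{eq:E2_estimate}. Crucially, because we only ever convolve at scales $t\le 1$ here, the relevant maximal function is a \emph{truncated} nontangential maximal function, and the norm equivalence \eqref{eq:norm_equivalence} must be replaced by its local analogue adapted to $h^p$: one needs that a truncated smooth maximal function controls the $h^p$ quasi-norm. This is the local counterpart of \cite[Theorem 2.1.4]{Grafakos2008} and is standard in the $h^p$ literature following Goldberg \cite{Goldberg}. Assembling, for $t\ge 1$ we use only the local term and for $t<1$ we use both; taking the supremum over $t\le 1$ and the $L^p$ norm, then invoking boundedness of $M_q$ on $L^p$ (valid since $p/q>1$), the truncated norm equivalence applied to both $\psi$ and a rescaling of $\varphi$, and the local Miyachi characterization, we obtain
\begin{equation*}
\|m_\varphi f\|_{\dot h^{1,p}}\lesssim\|N_p f\|_{L^p}+\sum_{j=1}^d\|\tilde\M^4_\varphi(\partial_j f)\|_{L^p}\lesssim\|f\|_{\dot h^{1,p}}.
\end{equation*}

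The general (noncompact) support case is then dispatched exactly as in Section 3.2, dominating $\varphi$ by a dyadic sum of normalized indicators and summing the local estimates over the dilated balls; the truncation does not interfere with this decomposition. The step I expect to be the main obstacle is the precise formulation and justification of the truncated norm equivalence in $h^p$: one must check that restricting all scales to $t\le 1$ does not lose control of the low-frequency part of the distribution, which is exactly the feature distinguishing $h^p$ from $H^p$. I would verify that the equivalence holds with the truncation built into both sides consistently, and confirm that the rescaling of $\varphi$ used in the $H^p$ case (there $\varphi_{1/8}$) remains admissible once scales are capped, possibly at the cost of adjusting the truncation radius by a fixed constant that is harmless after taking the supremum.
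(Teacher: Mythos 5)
Your proposal is correct and follows essentially the same route as the paper, which simply remarks that the argument of Theorem \ref{theorem:thm1} carries over once one has the local analogue of Lemma \ref{lemma:Miyachi} from \cite{Miyachi1990} and the $h^p$ quasi-norm equivalence with truncated nontangential maximal functions of Schwartz kernels; your $E_1$/$E_2$ splitting with the scale capped at $1$ and the truncated norm equivalence are exactly the intended adaptation. The only point to double-check is the precise form of the local Miyachi characterization (the natural analogue for $\dot h^{1,p}$ restricts $N_p$ to balls of bounded radius rather than adding $\|f\|_{L^p}$), but this does not change the structure of the argument.
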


The proof of this result follows the same lines as Theorem \ref{theorem:thm1} since one has the analogue of Lemma \ref{lemma:Miyachi} (see \cite{Miyachi1990}) for the $\dot{h}^{1,p}$ spaces, as well as the norm equivalence with any truncated nontangential maximal operator associated to a Schwartz kernel, so we omit the details.

\section*{Acknowledgments}
The authors are thankful to Emanuel Carneiro for helpful comments and discussions. M.S. acknowledges support from FAPERJ-Brazil. O.S. was supported by the Academy of Finland. Part of the research was done while O.S. was visiting University of the Basque country and also while in residence at Mathematical Sciences Research Institute, Berkeley, California, during the Spring 2017 semester (supported by NSF Grant No.~DMS-1440140). He wishes thank those institutes. C.P is supported by the Basque Government through the BERC 2014-2017 program and by Spanish Ministry of Economy and Competitiveness MINECO through BCAM Severo Ochoa excellence accreditation SEV-2013-0323 and through the project MTM2014-53850-P.


\begin{thebibliography}{99}

 \bibitem{AP} 
J. M. Aldaz and J. P\'{e}rez L\'{a}zaro, 
\newblock Functions of bounded variation, the derivative of the one dimensional maximal function, and applications to inequalities,
 \newblock Trans. Amer. Math. Soc. {\bf359} (2007), no. 5, 2443--2461.



\bibitem{BCHP}
J. Bober, E. Carneiro, K. Hughes and L. B. Pierce,
\newblock On a discrete version of Tanaka's theorem for maximal functions, 
\newblock Proc. Amer. Math. Soc. {\bf140} (2012), 1669--1680.


\bibitem{Calderon72}
A. Calder\'on,
\newblock Estimates for singular integral operators in terms of maximal functions,
\newblock  Studia Math. {\bf 44} (1972), 563–582. 

\bibitem{CFS}
E. Carneiro, R. Finder and M. Sousa, 
\newblock On the variation of maximal operators of convolution type II,
\newblock to appear at Rev. Mat. Iberoam. Preprint at http://arxiv.org/abs/1512.02715.

\bibitem{CH}
E. Carneiro and K. Hughes,
\newblock On the endpoint regularity of discrete maximal operators, 
\newblock Math. Res. Lett. {\bf19}, no. 6 (2012), 1245--1262.

\bibitem{CMa}
E. Carneiro and J. Madrid,
\newblock Derivative bounds for fractional maximal functions,
\newblock Trans. Amer. Math. Soc., to appear.

\bibitem{CM}
E. Carneiro and D. Moreira, 
\newblock On the regularity of maximal operators,
\newblock Proc. Amer. Math. Soc. {\bf136} (2008), no. 12, 4395--4404.

\bibitem{CS}
E. Carneiro and B. F. Svaiter, 
\newblock On the variation of maximal operators of convolution type,
\newblock J. Funct. Anal. {\bf265} (2013), 837--865.

\bibitem{Goldberg} D. Goldberg,
\newblock A local version of real Hardy spaces,
\newblock Duke Math. faruk {\bf46} (1979), 27--42. 


\bibitem{Grafakos2008}
L. Grafakos, \emph{Modern Fourier Analysis}, Springer, 2008.

\bibitem{HM} 
P. Haj\l asz and J. Maly,
\newblock On approximate differentiability of the maximal function,
\newblock Proc. Amer. Math. Soc. {\bf138} (2010), 165--174.

\bibitem{HO} 
P. Haj\l asz and J. Onninen, 
\newblock On boundedness of maximal functions in Sobolev spaces,
\newblock Ann. Acad. Sci. Fenn. Math. {\bf29} (2004),  no. 1, 167--176.



\bibitem{Kinnunen97}
J. Kinnunen, 
\newblock The Hardy-Littlewood maximal function of a Sobolev function,
\newblock Israel J. Math. {\bf100} (1997), 117--124.

\bibitem{KS2003}
J. Kinnunen and E. Saksman,
\newblock Regularity of the fractional maximal function, 
\newblock Bull. London Math. Soc. 35 (2003), no. 4, 529--535.

\bibitem{KS2008}
P. Koskela and E. Saksman, \newblock{Pointwise characterizations of Hardy--Sobolev functions}, Math. Res. Lett. \textbf{15} (2008), no. 4, 727--744.


\bibitem{Ku}
O. Kurka,
\newblock On the variation of the Hardy-Littlewood maximal function,
\newblock Ann. Acad. Sci. Fenn. Math.{\bf 40} (2015), 109--133.


\bibitem{LP2005}
A. Lerner and C. P\'erez, \newblock{Self-improving properties of generalized Poincaré type inequalities through rearrangements}, Math. Scand. {\bf97} (2005), no. 2, 217--234.


\bibitem{Lu1}
H. Luiro, 
\newblock Continuity of the maximal operator in Sobolev spaces,
\newblock Proc. Amer. Math. Soc.{\bf 135} (2007), no. 1, 243--251.
 
\bibitem{Lu2}
H. Luiro,
\newblock On the regularity of the Hardy-Littlewood maximal operator on subdomains of $\R^d$,
\newblock Proc. Edinburgh Math. Soc.{\bf 53} (2010), no 1, 211--237.


\bibitem{Luiro2017}
H. Luiro, 
\newblock The variation of the maximal function of a radial function,
\newblock preprint at http://arxiv.org/abs/1702.00669.

\bibitem{LM2017}
H. Luiro and J. Madrid, 
\newblock The Variation of the Fractional Maximal Function of a Radial Function,
\newblock preprint.


\bibitem{FPW}
B. Franchi, C. P\'erez, and R. L. Wheeden, 
\newblock{  Self-improving  properties of John-{N}irenberg and Poincar\'e inequalities on spaces of
  homogeneous type}, J. Funct. Anal. \textbf{153} (1998), no.1, 108--146.
  
    \bibitem{MP}
P. MacManus and C. P\'{e}rez, \newblock{Generalized Poincar\'{e}
Inequalities: Sharp Self-Improving Properties},
International Math. Research Notices, {\bf 2} (1998), 101--116.


\bibitem{Miyachi1990}
A. Miyachi, 
\newblock{Hardy--Sobolev spaces and maximal functions}, J. Math. Soc. Japan {\bf42} (1990), no. 1, 73--90. 

\bibitem{Saari2016}
O. Saari,
\newblock{Poincaré inequalities for the maximal function,}
\newblock preprint at http://arxiv.org/abs/1605.05176.




\bibitem{Strichartz90}
R. Strichartz,
\newblock{ $H^p$ Sobolev spaces},
\newblock  Colloq. Math. {\bf60/61} (1990), no. 1, 129–139. 


\bibitem{Ta}
H. Tanaka, 
\newblock A remark on the derivative of the one-dimensional Hardy-Littlewood maximal function,
\newblock Bull. Austral. Math. Soc. {\bf 65} (2002), no. 2, 253--258.

\bibitem{Te}
F. Temur,
\newblock On regularity of the discrete Hardy-Littlewood maximal function,
\newblock preprint at http://arxiv.org/abs/1303.3993.


\end{thebibliography}
\end{document}